\title{Rationality does not specialize among terminal varieties}
\author{Burt Totaro}
\date{  }
\def\C{\text{\bf C}}
\def\P{\text{\bf P}}
\def\arrow{\rightarrow}
\begin{document}
\maketitle
\newtheorem{theorem}{Theorem}[section]
\newtheorem{corollary}[theorem]{Corollary}
\newtheorem{lemma}[theorem]{Lemma}

\theoremstyle{definition}
\newtheorem{definition}[theorem]{Definition}
\newtheorem{example}[theorem]{Example}

\theoremstyle{remark}
\newtheorem{remark}[theorem]{Remark}

An algebraic variety is {\it rational }if it becomes isomorphic
to projective space after removing lower-dimensional
subvarieties from both sides. Little is known about
how rationality behaves in families. In particular,
given a family of projective varieties for which the geometric
generic fiber is rational, is every fiber geometrically rational?
(``Geometric'' refers to properties of a variety after
extending its base field to be algebraically closed.)

Matsusaka proved that the analogous
question for geometric ruledness has a positive answer
\cite[Theorem IV.1.6]{Kollarbook}.
(By definition, a variety is ruled if it is birational to the product
of the projective line with some variety.) That is, ruledness
specializes in families of varieties. For example,
Koll\'ar used Matsusaka's theorem
to show that a large class of Fano hypersurfaces
are not ruled and therefore not rational
\cite[Theorem V.5.14]{Kollarbook}. By contrast, rationality
does not specialize in this generality, as shown by a family
of cubic surfaces over the complex numbers $\C$
with most fibers smooth and one fiber the projective cone
over a smooth cubic curve. Every smooth cubic surface
is rational, but the cone over a smooth cubic curve $E$ is birational
to $E\times \P^1$, which is not rational because
it has a nonzero holomorphic 1-form.

Note, however, that the cone over a cubic curve has a fairly bad singularity:
it is log canonical but not klt (Kawamata log terminal). This suggests
the question of whether rationality specializes among
varieties with milder singularities. Indeed, it follows
from de Fernex and Fusi \cite[Theorem 1.3]{DFF} and Hacon
and M\textsuperscript{c}Kernan
\cite[Corollary 1.5]{HM}
that rationality specializes
among klt complex varieties of dimension at most 3.

Extending work of Voisin \cite{Voisin} and Colliot-Th\'el\`ene
and Pirutka \cite{CTP},
\cite[Theorem 2.1]{Totarohyper}
showed that a large class of Fano hypersurfaces $X$
are not stably rational. (That is,
no product of $X$ with projective space is rational.)
As an application, suggested by de Fernex,
\cite[Corollary 4.1]{Totarohyper} showed
that rationality does not specialize among klt varieties
of dimension 4 or higher.

In this paper, we find that the results of \cite{Totarohyper}
are strong enough to imply that rationality
does not specialize even among {\it terminal }varieties.
Terminal singularities
form the narrowest class of singularities that comes up
in the minimal model program.
The examples
are in any dimension at least 5. 

Some natural remaining questions are:
Does rationality specialize among terminal 4-folds?
Does rationality specialize among smooth varieties?

This work was supported by NSF grant DMS-1303105.

\section{The example}

\begin{theorem}
\label{main}
There is a flat projective morphism $f\colon  X\arrow C$
with $C$ a Zariski open subset of the complex affine line
such that $0$ is in $C$,
all fibers of $f$ have terminal singularities, all fibers of $f$
over $C-0$ are rational, and the fiber $F$ over $0$ is not rational.

Such examples exist with $F$ of any dimension at least 5.
There is also a family of 4-folds with canonical singularities over
a Zariski open subset $C$ of $A^1_{\C}$
such that all fibers over $C-0$ are rational and the fiber $F$
over $0$ is not rational.
\end{theorem}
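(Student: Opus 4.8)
The plan is to place the non-rational fiber at the vertex of a projective cone and to make the neighboring fibers rational by an elementary projection, so that the only substantial input is the stable irrationality theorem \cite[Theorem 2.1]{Totarohyper}. First I fix a very general hypersurface $Y\subset\P^m$ of degree $d$ and dimension $m-1$; for $d$ large enough in terms of $m$ that theorem guarantees $Y$ is not stably rational (for instance a very general quartic fourfold, $d=4$, $m=5$). Writing $f(x_0,\dots,x_m)$ for its equation, I let $F=C(Y)\subset\P^{m+1}$ be the projective cone, namely the degree-$d$ hypersurface $\{f=0\}$ with vertex $v=[0:\cdots:0:1]$. Since the lines through $v$ sweep out $F$, it is birational to $Y\times\P^1$; because $Y$ is not stably rational, $Y\times\P^1$ is not rational, so $F$ is not rational. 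Applying the same argument to $F\times\P^k\sim Y\times\P^{k+1}$ shows $F$ is not even stably rational, a fact I will use later.

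For the family I would take the pencil
\[
F_t=\{\,f(x_0,\dots,x_m)+t\,x_{m+1}\,c(x_0,\dots,x_m)=0\,\}\subset\P^{m+1},
\]
with $c$ a general form of degree $d-1$ and $C\subset\mathbb{A}^1$ an open neighborhood of $0$ to be shrunk. This is a family of degree-$d$ hypersurfaces in a fixed $\P^{m+1}$, hence flat, with central fiber $F_0=F$. For $t\neq0$, in the chart $x_{m+1}=1$ the equation becomes $f+tc$, whose terms of lowest order at $v$ are $t\,c$, of degree $d-1$; thus $F_t$ has multiplicity exactly $d-1$ at $v$. A general line through $v$ then meets $F_t$ at $v$ with multiplicity $d-1$ and in one further point, so projection away from $v$ defines a birational map from $F_t$ to $\P^m$, and every fiber over $C-0$ is rational.

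Next I would check that all fibers are terminal. For an $m$-dimensional hypersurface with a point of multiplicity $e$ and smooth projectivized tangent cone, blowing up that point resolves it by a single exceptional divisor of discrepancy $m-e$, so the point is terminal exactly when $e\le m-1$. For $F_0=C(Y)$ the tangent cone is $Y$, which is smooth, and $e=d$, giving discrepancy $m-d$; this is positive precisely when the Fano index $m+1-d$ of $Y$ is at least $2$, i.e.\ $d\le m-1$. For $t\neq0$ the tangent cone at $v$ is $\{c=0\}$, smooth for general $c$, with $e=d-1\le m-2$, so $F_t$ is terminal at $v$. Computing partials shows that any other singular point of $F_t$ with $t\neq0$ would lie on $\{f=c=0\}$ with $\nabla f$ and $\nabla c$ linearly dependent there; since for general $c$ the complete intersection $\{f=c=0\}$ is smooth, this is impossible, so $v$ is the only singular point. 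Discarding the finitely many $t$ for which $F_t$ is reducible, I obtain an open $C\ni0$ over which all fibers are terminal, all fibers over $C-0$ are rational, and $F_0$ is not rational.

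Finally I would read off the stated ranges. The constraints $d\le m-1$ (terminality) and ``$d$ large'' (stable irrationality, \cite{Totarohyper}) are first compatible at $m=5$, $d=4$, giving a terminal fivefold example. For every higher dimension I multiply the whole family by $\P^1$: since $F$ is terminal and not stably rational, $F\times\P^1$ is again terminal and not rational, while each general fiber times $\P^1$ stays terminal and rational, so the product family works in all dimensions at least $5$. Running the same pencil with $Y$ a very general quartic threefold, not stably rational by \cite{CTP}, gives $m=4$, $e=d=4$ and discrepancy $0$: now $F_0$ is canonical rather than terminal, while the other fibers remain terminal, which is the fourfold family with canonical singularities. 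The hard part is exactly this numerical tension: terminality forces the index of $Y$ to be at least $2$, hence $d\le\dim Y$, whereas the irrationality theorem forces $d$ to be large, and balancing the two is what makes $5$ the smallest terminal dimension and leaves dimension $4$ only canonical.
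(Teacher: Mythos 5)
Your construction is the same as the paper's: take the projective cone $F_0$ over a very general quartic fourfold $Y$ (not stably rational by \cite[Theorem 2.1]{Totarohyper}), deform it in the pencil $f + t\,x_{m+1}c = 0$, use projection from the vertex to see that the fibers with $t \neq 0$ (which have multiplicity $d-1$ at the vertex) are rational, multiply by projective spaces for higher dimensions (where, as you note, stable irrationality of $Y$ is exactly what is needed), and substitute the Colliot-Th\'el\`ene--Pirutka quartic threefold for the canonical fourfold case. Where you genuinely diverge is in how the singularities of the fibers are controlled. The paper takes $g_3$ to be \emph{any} nonzero cubic, checks terminality only of the central fiber $X_0$ (via Koll\'ar's cone criterion: $Y$ has Fano index $2 > 1$), and then invokes two nontrivial general theorems --- Nakayama's result that ``terminal'' is Zariski-open in flat families, and Kawamata's that ``canonical'' is open --- to get terminality (resp.\ canonicity) of nearby fibers after shrinking $C$. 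You instead take $c$ \emph{general}, and verify every fiber by hand: a Bertini argument shows $\mathrm{Sing}(F_t) = \{v\}$ for all $t \neq 0$, and a single blowup of the vertex (legitimate because the projectivized tangent cones $\{f=0\}$ and $\{c=0\}$ are smooth) computes the discrepancy as $\dim F_t - \mathrm{mult}_v$, giving terminality for $e \le m-1$ and canonicity at $e = m$. Your discrepancy criterion is correct (a resolution with all exceptional discrepancies positive, resp.\ nonnegative, certifies terminal, resp.\ canonical, since further blowups of a smooth model only increase discrepancies), though you should state explicitly that it presumes the singular point is isolated, which you do verify in both applications. The trade-off: your route is self-contained and even yields terminality of \emph{all} fibers $t \neq 0$ without shrinking $C$, at the cost of a genericity hypothesis on $c$ and some computation; the paper's route is shorter and works for arbitrary nonzero $g_3$, but rests on the deformation theorems of Nakayama and Kawamata, which are substantial results. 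Your closing observation about the numerical tension --- terminality of the cone forces index $\ge 2$, i.e.\ $d \le \dim Y$, while stable irrationality forces $d$ large, meeting first at quartic fourfolds --- is exactly the structural point underlying the paper's choice of examples.
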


In other words, rationality does not specialize among terminal varieties
of dimension at least 5, or among canonical varieties of dimension
at least 4. (Throughout, we are talking about families
of projective varieties.)

\begin{proof} (Theorem \ref{main})

We start with the following old observation.

\begin{lemma}
\label{mult}
If $X$ is a hypersurface of degree $d$ in $\P^{n+1}$ over a field $k$
such that $X$ has multiplicity equal to $d-1$
at some $k$-rational point $p$,
and if the singular locus
of $X$ has codimension at least 2, then $X$ is rational over $k$.
\end{lemma}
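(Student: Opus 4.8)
The plan is to exhibit $X$ as birational to $\P^n$ by projecting away from the point $p$ of multiplicity $d-1$. First I would record what the codimension hypothesis buys: since a hypersurface is Cohen--Macaulay it satisfies Serre's condition $S_2$, and a singular locus of codimension at least $2$ gives $R_1$; by Serre's criterion $X$ is normal, and a normal projective hypersurface (of positive dimension) is connected, hence integral. Equivalently, were the defining form reducible or to have a repeated factor, the singular locus of $X$ would contain a divisor. This is exactly where the codimension-$2$ assumption enters: it guarantees that $k(X)$ is a field, so that ``rational over $k$'' is meaningful and the function-field computation below is legitimate.

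Next I would put the geometry in normal form. Choosing homogeneous coordinates $x_0,\ldots,x_{n+1}$ on $\P^{n+1}$ with $p=[0:\cdots:0:1]$, the hypothesis $\operatorname{mult}_p X = d-1$ says exactly that the defining form is $F = x_{n+1}A(x_0,\ldots,x_n)+B(x_0,\ldots,x_n)$, with $A$ homogeneous of degree $d-1$, $B$ homogeneous of degree $d$, and $A\not\equiv 0$ (if $A$ vanished the multiplicity at $p$ would be $d$). The key feature is that $F$ is affine-linear in $x_{n+1}$: restricting $F$ to the line through $p$ and a point $[a_0:\cdots:a_n:0]$ produces $s^{d-1}\bigl(t\,A(a)+s\,B(a)\bigr)$, and dividing out the factor $s^{d-1}$ that records the multiplicity $d-1$ at $p$ leaves a single linear equation in the line parameter. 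Thus the general line through $p$ meets $X$ in exactly one further point, which is what makes the projection birational rather than merely finite of higher degree.

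Carrying this out, the linear projection away from $p$, written $\pi\colon X\arrow\P^n$, $[x_0:\cdots:x_{n+1}]\mapsto[x_0:\cdots:x_n]$, has the explicit rational inverse $\psi\colon\P^n\arrow X$ given by $[a_0:\cdots:a_n]\mapsto[A(a)\,a_0:\cdots:A(a)\,a_n:-B(a)]$, defined on the nonempty open set $\{A\neq 0\}$; a direct substitution using the homogeneity of $A$ and $B$ checks that $\psi(a)$ lies on $X$ and that $\pi\circ\psi=\operatorname{id}$ there. Both maps are visibly defined over $k$. To conclude, I would feed this into the function-field picture: $\pi\circ\psi=\operatorname{id}$ forces the induced field homomorphism $\psi^{*}\colon k(X)\arrow k(\P^n)$ to be surjective, hence (being a homomorphism out of a field) an isomorphism, with inverse $\pi^{*}$. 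Therefore $\pi$ is a birational equivalence over $k$ between $X$ and $\P^n$, and $X$ is rational over $k$.

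The computational heart is the normal-form step: the content of the lemma is the observation that multiplicity \emph{exactly} $d-1$ is the precise condition under which projection from $p$ has degree one. The only genuine subtlety, and the main thing to be careful about since $k$ need not be algebraically closed, is guaranteeing that $X$ is a variety over $k$ so that the birationality conclusion is valid; this is exactly the role played by the codimension-$2$ hypothesis on the singular locus.
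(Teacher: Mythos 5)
Your proof is correct and takes essentially the same route as the paper: the codimension-$2$ hypothesis on the singular locus gives irreducibility, and projection from the multiplicity-$(d-1)$ point $p$ is birational because a general line through $p$ meets $X$ in exactly one further point. The paper states this in three sentences; your coordinate normal form $F = x_{n+1}A + B$, the explicit inverse $\psi$, and the function-field step are just the details behind that sketch.
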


\begin{proof}
The assumption on the singular
locus ensures that $X$ is irreducible. The assumption on the multiplicity
of $X$ at $p$ implies that a general line through $p$ meets $X$
in exactly one other point. That gives a birational map over $k$
from the projective space $\P^n$ of lines through $p$ to $X$.
\end{proof}

We return to the proof of Theorem \ref{main}.
By \cite[Theorem 2.1]{Totarohyper}, a very general
quartic 4-fold in $\P^5_{\C}$
is not stably rational. Choose one smooth quartic 4-fold $Y$ over $\C$
which is not stably rational. Let $X_0$ be the projective cone
over $Y$ in $\P^6$. Then $X_0$ is a quartic 5-fold, and $X_0$
is not rational because it is birational to $\P^1\times Y$.
Also, $X_0$ is terminal, because $Y$ has Fano index 2 which is greater than 1,
meaning that the anticanonical bundle $-K_Y$ is given
by $-K_{Y}\cong -(K_{\P^5}+Y)|_Y=O(6-4)|_Y=O(2)|_Y$
\cite[Lemma 3.1]{Kollarsing}.

Let $Y$ be defined by the equation
$f_4(x_0,\ldots,x_5)=0$. Then $X_0$
is defined by the same equation in $\P^6=\{[x_0,\ldots,x_6]\}$.
Let $g_3(x_0,\ldots,x_5)$ be a nonzero cubic form over $\C$.
Consider the pencil of quartics in $\P^6$ given
by the equation
$$f_4(x_0,\ldots,x_5)+ag_3(x_0,\ldots,x_5)x_6=0$$
for $a$ in the affine line $A^1_{\C}$. This gives a flat family $f\colon
X\arrow A^1$
of hypersurfaces, and the fiber over 0 is the cone $X_0$.
Since ``terminal'' is a Zariski-open condition in families
\cite[Corollary VI.5.3]{Nakayama},
there is a Zariski open neighborhood $C$ of $0$ in $A^1$ such that
all fibers of the restricted family $f\colon X_C\arrow C$
are terminal. In particular, the fibers are normal and hence
have singular locus of codimension at least 2.

Finally, for all $a\neq 0$ in $C$, the fiber $X_a$ is a hypersurface
of degree 4 in $\P^6$ with multiplicity equal to 3 at the point
$[0,\ldots,0,1]$. By Lemma \ref{mult},
it follows that $X_a$ is rational for all $a\neq 0$
in $C$. Since $X_0$ is not rational, this completes the proof
that rationality does not
specialize among terminal varieties.

The example given is a family of 5-folds. Multiplying the family
with any projective space $\P^m$ shows that rationality does
not specialize among terminal varieties of any dimension at least 5.
(Here again, it is important that $Y$ is not stably rational,
so that $X_0\times \P^m$ is not rational.)

Finally, replace the 4-fold $Y$ by a smooth quartic 3-fold (again
called $Y$) in $\P^4_{\C}$ which is not stably rational.
Such a variety exists, by
Colliot-Th\'el\`ene and Pirutka \cite{CTP}. It follows
that the projective
cone $X_0$ over $Y$ in $\P^5$ (rather than $\P^6$) is not rational.
Since $Y$ has Fano index 1, $X_0$
has canonical
but not terminal singularities. Also, ``canonical'' is a Zariski open
condition in families \cite{Kawamata}. A pencil of hypersurfaces in $\P^5$
given by the same formula as above
shows that rationality does not specialize
among 4-folds with canonical singularities.
\end{proof}


\small \sc UCLA Mathematics Department, Box 951555,
Los Angeles, CA 90095-1555

totaro@math.ucla.edu
\end{document}